\numberwithin{equation}{section}
\theoremstyle{plain}
\newtheorem{theorem}{Theorem}[section]
\newtheorem{theorema}[theorem]{Theorem}
\newtheorem{prop}[theorem]{Proposition}
\newtheorem{lemma}[theorem]{Lemma}
\newtheorem{cor}[theorem]{Corollary}
\theoremstyle{definition}
\newtheorem{definition}[theorem]{Definition}
\newtheorem{exam}[theorem]{Example}
\newtheorem{rema}[theorem]{Remark}
\newcommand{\N}{\ensuremath{\mathbb N}}
\newcommand{\Q}{\ensuremath{\mathbb Q}}
\newcommand{\R}{\ensuremath{\mathbb R}}
\newcommand{\Z}{\ensuremath{\mathbb Z}}
\def\la{\langle}
\def\ra{\rangle}
\def\Card{\mbox{Card\,}}
\def\Char{\mbox{char\,}}
\def\ext{\mbox{Ext\,}}
\def\gr{\mbox{\bf gr\,}}
\def\id{\mbox{id\,}}
\begin{document}

\title{A remark on Golod--Shafarevich algebras}
\thanks{Partially
supported by the grant 14-01-00416 of the Russian Basic Research Foundation 
}%

\author{Dmitri Piontkovski}

      \address{Department of High Mathematics for Economics,
 Myasnitskaya str. 20, National Research University `Higher School of Economics', Moscow  101000, Russia
}

\email{piont@mccme.ru}



\date{\today}

\begin{abstract}
We show that
a direct limit of surjections of (weak) Golod--Shafarevich  algebras is a weak Golod--Shafarevich algebra as well. This holds both for graded and for filtered algebras provided that the filtrations are induced by the filtration of the first entry of the sequence. 
It follows that the limit is an algebra of exponential growth. An example shows that the assumptions of this theorem cannot be directly weakened. 
\end{abstract}

\maketitle

\section{Introduction}

We consider finitely generated algebras and completed finitely generated algebras over a fixed field $k$.
Given such an algebra $A$, let us define a descending $\R_{>0}$--filtration on it 
in the following way. We choose a discrete set of generators $X$
of the algebra $A$ and a degree function $\deg :X \to \R_{>0}$. Then for each nonzero element $x\in A$ we say $\deg x \ge \alpha$ iff $x = \sum_i c_i v_i$, 
where $0 \ne c_i \in k$ and $v_i $ is a product of generators of the total degree at least $\alpha$. 
In addition, we put $\deg 0 = +\infty$. This defines the degree for each element of the algebra.

 The degree function $\deg :X \to \R_{>0}$ defines the descending filtrations on the free algebra $k\la X \ra$
 and the completed free algebra $k\la \la X \ra \ra$. Hence we  uniquely define the degrees for 
 the relations $R = \{r_1, r_2, \dots \}$ of the algebra $A$ in the both finitely generated and complete cases.  
 Also, one can define a {\em Hilbert series} of the algebra $A$
 as a generalized formal power series 
 $$
 H_A (z) = \sum_{\alpha \ge 0} z^\alpha \dim (A_\alpha/\cup_{\beta>\alpha}A_\beta) , 
 $$ 
 where $A_{\alpha}$ is the vector space of elements 
 of degree $\ge \alpha$.  
 There is only finite number of monomials on the generators of degree $\le  \alpha$ for each $\alpha$, 
 so, the sum always has sense.  Then $ H_A (z)$ a Hahn series with the value group $G$ which is the subgroup of $(\R,+)$ generated by the degrees of the elements of $X$.
 
 Given a discrete subset $S$ of a filtered algebra such that for each $\alpha > 0$ the subset of elements 
 of degree $\alpha$ is finite, put also $H_S(z) = \sum_{s\in S} z^{\deg s}$.  

Following Golod and Shafarevich~\cite{gsh}, we associate to $A$ a generalized formal power series (or a polynomial, in the case of finite set of relations $R$)
$f_A(z) := 1-H_X(z) +H_R(z)$, where $R$ is a discrete set of the defining relations of $A$.
 
 \begin{definition}
 \label{def:gs}
 A filtered algebra $A$ as above is called  Golod--Shafarevich (respectively, weak { Golod--Shafarevich})  if for some $z_0\in [0,1]$
 the series $f_A(z)$ converges 
and $f_A(z_0) < 0$ (respectively, $f_A(z_0) \le 0$).
%
  \end{definition}
  
  We write wGS and GS for (weak) Golod--Shafarevich. 
  Our notion of GS algebra is analogous to the notion of generalized GS group~\cite[4.2]{e}.
  
  Note that if a (w)GS algebra $A=\{ X | R\}$ has another presentation $A=\{ X | R'\}$ with $X'\subset X$ and $R' \subset R$ 
  then it is still (w)GS  w.~r.~t. this new presentation. 
  However, we will see in~Example~\ref{exam_add_new_var} that in some cases
 the properties of  being (w)GS valuably depend on the choice of generators and relations.

In addition, the next example shows 
 that the properties of algebra of being (w)GS essentially depend on the degree function.  
 
 \begin{exam}
Consider
 an algebra $M$ generated by three variables $x,y,z$ 
 subject to the set of monomial relations $R=\{ x^2,y^2,xz \} $. If we put 
 $\deg x = \deg y = \deg z =1$ (standard grading), the value of $f_M(z) = 1-3z+3z^2 $  is positive for all positive $z$, so that $M$ is neither GS nor wGS. In contrast, if we consider another grading by putting $\deg x = \deg y = 2$ and $ \deg z =1 $, we get $f_M(z) = 1-z-2z^2+ z^3+2z^4$.
 Then $f_M(0.6)<0$, so that $M$ is GS.
 
 Note that the polynomial ring on three variables and, more general, a quadratic Artin--Shelter regular ring of dimension three has the same number and degrees of generators and relations as the algebra $M$ above. However, these rings are neither GS nor wGR w.~r.~t. any degree function because, by definition, their growth is polynomial and  for such a ring $A$ we have $f_A(1) = 1$. It shows also that the property of being it essentially depend not only on the number of relations but also on their form.
 \end{exam}
 
 The main reason to consider (w)GS algebras is the 
 famous Golod--Shafarevich theorem~\cite{gsh}. We use it   in its   form for filtered algebras
 (see~\cite[Prop.~2.7 and Th.~4.1]{e}), that is, in a modified  Vinberg's form~\cite{vin}.
 
 \begin{theorem}
 In the above notation, suppose that the degree function $\deg$ is integer-valued. Then there is a coefficient-wise inequality of formal power series 
 $$
      \frac{f_A(z)}{1-z} H_A(z) \ge \frac{1}{1-z}.
 $$
 \end{theorem}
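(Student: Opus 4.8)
The plan is to deduce the stated inequality of generalized series from an equivalent family of numerical inequalities involving the dimensions of the truncations of $A$, and then to prove the latter by the two surjections underlying the Golod--Shafarevich estimate, used in ``cumulative'' form. For $n\ge 0$ put $c_n:=\dim_k\bigl(A/A_{>n}\bigr)$; this is finite since only finitely many monomials on $X$ have degree $\le n$, and by construction $\frac{1}{1-z}H_A(z)=\sum_{n\ge 0}c_nz^n$. Because $\deg$ is integer--valued, the coefficient of $z^n$ in $\frac{f_A(z)}{1-z}H_A(z)$ equals $c_n-\sum_{x\in X}c_{\,n-\deg x}+\sum_j c_{\,n-\deg r_j}$, so the claim $\frac{f_A(z)}{1-z}H_A(z)\ge\frac{1}{1-z}$ is equivalent to
$$
\sum_{x\in X}c_{\,n-\deg x}\ \le\ c_n-1+\sum_{j}c_{\,n-\deg r_j}\qquad(n\ge 0),
$$
with the convention $c_m=0$ for $m<0$ (where $A/A_{>m}=0$). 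It therefore suffices to establish this inequality for each $n$.

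Fix $n$, let $\Lambda=k\la X\ra$ (or $k\la\la X\ra\ra$ in the complete case), and let $\phi\colon\Lambda\twoheadrightarrow A$ have kernel $I$, the (closed) two--sided ideal generated by $R$. Write $\bar\Lambda=\Lambda_{>0}$, $\bar A=A_{>0}$ for the augmentation ideals; for $m\ge 0$ set $A^{(m)}=A/A_{>m}$ (of dimension $c_m$), $\Lambda^{(m)}=\Lambda/\Lambda_{>m}$, and $M^{(m)}=M/(M\cap\Lambda_{>m})$ for $M\subseteq\Lambda$. Since $\bar A=\sum_{x}xA$ and multiplication by $x$ carries $A_{>\,m-\deg x}$ into $A_{>m}$, there is a surjective linear map
$$
\mu_n\colon\ \bigoplus_{x\in X}A^{(n-\deg x)}\ \longrightarrow\ \bar A^{(n)},\qquad\text{with}\quad\dim\bar A^{(n)}=c_n-1 .
$$
From the free decomposition $\bar\Lambda=\bigoplus_x x\Lambda$ (unique first letter of a monomial) one gets $\Lambda_{>n}=\bigoplus_x x\Lambda_{>\,n-\deg x}$, hence an isomorphism $\bigoplus_x\Lambda^{(n-\deg x)}\xrightarrow{\ \sim\ }\bar\Lambda^{(n)}$ carrying $\bigoplus_x I^{(n-\deg x)}$ to the image of $\bar\Lambda I=\sum_x xI$ in $\bar\Lambda^{(n)}$. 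Reducing modulo $I$ thus identifies the source of $\mu_n$ with $\bar\Lambda/(\bar\Lambda I+\Lambda_{>n})$ and $\mu_n$ with the projection onto $\bar A^{(n)}=\bar\Lambda/(I+\Lambda_{>n})$; by the modular law,
$$
K_n:=\ker\mu_n\ \cong\ I/\bigl(\bar\Lambda I+(I\cap\Lambda_{>n})\bigr),\qquad \dim K_n=\sum_{x\in X}c_{\,n-\deg x}-(c_n-1).
$$

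It remains to prove $\dim K_n\le\sum_j c_{\,n-\deg r_j}$. Consider $\Phi\colon\bigoplus_j\Lambda\to K_n$, $(\lambda_j)\mapsto\overline{\sum_j r_j\lambda_j}$. It is surjective: as a right $\Lambda$--module $I$ is generated by the elements $\beta r_j$ with $\beta$ a monomial of $\Lambda$, and $\beta r_j\in\bar\Lambda I$ as soon as $\beta\in\bar\Lambda$, so modulo $\bar\Lambda I$ the classes of $r_1,r_2,\dots$ already generate $I/\bar\Lambda I$ as a right $\Lambda$--module. Moreover $\ker\Phi\supseteq\bigoplus_j\bigl(I+\Lambda_{>\,n-\deg r_j}\bigr)$: if $\lambda_j\in I$ then $r_j\lambda_j\in I^2\subseteq\bar\Lambda I$ because $I\subseteq\bar\Lambda$, while if $\lambda_j\in\Lambda_{>\,n-\deg r_j}$ then $r_j\lambda_j\in I\cap\Lambda_{>n}$. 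Hence $\Phi$ factors through a surjection $\bigoplus_j\Lambda/(I+\Lambda_{>\,n-\deg r_j})=\bigoplus_j A^{(n-\deg r_j)}\twoheadrightarrow K_n$, which gives the bound; combined with the value of $\dim K_n$ it is precisely the numerical inequality of the first paragraph, completing the proof.

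I expect the crux to be the description of $K_n$ together with the surjectivity of $\Phi$, i.e.\ the observation that after killing $\bar\Lambda I$ a two--sided generating set of $I$ collapses to a right--module generating set of the same ``shape'', the superfluous generators $\beta r_j$ $(\beta\in\bar\Lambda)$ being swallowed by $\bar\Lambda I$ and the contributions in degrees $>n$ by $I\cap\Lambda_{>n}$, using $I^2\subseteq\bar\Lambda I$. In the complete case nothing new is needed, since all the objects above are viewed inside the finite--dimensional quotients $\Lambda^{(m)}$, where convergence issues disappear; and the hypothesis that $\deg$ is integer--valued is used only to identify $\frac{1}{1-z}H_A(z)$ with $\sum_n c_nz^n$. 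This is in essence Vinberg's sharpening of the Golod--Shafarevich inequality; cf.\ \cite{vin} and \cite[Prop.~2.7 and Th.~4.1]{e}.
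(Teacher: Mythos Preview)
The paper does not actually prove this statement; it quotes the theorem as a known result, attributing it to Golod--Shafarevich in Vinberg's form and citing \cite{vin} and \cite[Prop.~2.7 and Th.~4.1]{e}. Your argument is correct and is precisely the standard Vinberg-style proof that those references contain: you rewrite the coefficient-wise inequality as the family of dimension bounds $c_n-\sum_{x}c_{n-\deg x}+\sum_j c_{n-\deg r_j}\ge 1$, identify the kernel of the multiplication surjection $\bigoplus_x A^{(n-\deg x)}\twoheadrightarrow \bar A^{(n)}$ with $I/(\bar\Lambda I+(I\cap\Lambda_{>n}))$, and bound its dimension via the surjection from $\bigoplus_j A^{(n-\deg r_j)}$ using $I^2\subset\bar\Lambda I$. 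So your proposal simply supplies, in full, the proof the paper chose to outsource to the literature; there is nothing to correct and no genuine divergence to compare.

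One tacit hypothesis you are using (and which the paper also assumes implicitly) is $I\subset\bar\Lambda$, i.e.\ the relations lie in the augmentation ideal; this is needed both for $I^2\subset\bar\Lambda I$ and for $\dim(A/A_{>0})=1$, and it is automatic in any nondegenerate presentation since a relation with nonzero constant term would force $A=0$.
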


The following corollary is also sometimes called Golod--Shafarevich theorem. Note that we do not assume here that the degree function $\deg$ is integer-valued.

\begin{theorema}
\label{th:gsII}
The inequality  $f_A(z_0) H_A(z_0) \ge 1$ holds for all real $z_0\in [0,1]$ (where we assume $+\infty > 1$). 
\end{theorema}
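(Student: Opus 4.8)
The plan is to bootstrap from the coefficient-wise inequality of the preceding theorem in three successive stages: $\deg$ integer-valued, then rational-valued, then arbitrary real-valued. First I would dispose of the degenerate values of the data. If $H_A(z_0) = +\infty$ the product $f_A(z_0)H_A(z_0)$ involves $+\infty$ and the asserted inequality holds by the stated convention; and if $H_R(z_0) = +\infty$ then, since $X$ is finite and hence $H_X(z_0) < \infty$, we have $f_A(z_0) = +\infty$ and again there is nothing to prove. So one may assume $H_A(z_0) < \infty$ and $H_R(z_0) < \infty$ throughout, and the target becomes the sharper assertion $f_A(z_0) > 0$, $H_A(z_0) \ge 1/f_A(z_0)$.

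For integer-valued $\deg$ and $z_0 \in [0,1)$ the point is simply that one may substitute $z = z_0$ into the preceding theorem. The series $\frac{f_A(z)}{1-z}$ converges absolutely at $z_0$ — its $n$-th coefficient is bounded by $1 + \#X$ plus the $n$-th partial sum of the coefficients of $H_R$, and $\sum_n(\text{partial sum})\,z_0^n = H_R(z_0)/(1-z_0) < \infty$ — and so does $H_A(z)$ (non-negative coefficients, $H_A(z_0) < \infty$); evaluating the preceding theorem at $z_0$ (its difference series having non-negative coefficients) therefore gives $\frac{f_A(z_0)}{1-z_0}H_A(z_0) \ge \frac{1}{1-z_0}$, and multiplying by $1-z_0 > 0$ yields $f_A(z_0)H_A(z_0) \ge 1$; in particular $f_A(z_0) > 0$, as $H_A(z_0) \ge 1$. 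For $z_0 = 1$: the hypothesis $H_A(1) < \infty$ makes $A$ finite-dimensional; if $R$ is infinite then $f_A(1) = +\infty$ and we are done, and otherwise $f_A$ and $H_A$ are polynomials, so $f_A(1)H_A(1) = \lim_{z\to 1^-}f_A(z)H_A(z) \ge 1$.

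Next I would pass to rational $\deg$: writing $\deg = \frac1N\deg'$ with $\deg'\colon X \to \Z_{>0}$ and letting $A'$ denote $A$ equipped with the degree function $\deg'$, an element has $\deg'$-degree $\ge N\alpha$ iff it has $\deg$-degree $\ge \alpha$, so termwise $H_{A'}(w) = H_A(w^N)$, and likewise $f_{A'}(w) = f_A(w^N)$ (rescaling all generator degrees by $N$ rescales every monomial degree, hence every $\deg r$, by $N$); the integer case applied to $A'$ at $w_0 = z_0^{1/N} \in [0,1]$ then gives $f_A(z_0)H_A(z_0) \ge 1$. For arbitrary real $\deg$ and $z_0 \in [0,1)$ I would approximate: choose rational degree functions $\deg_n$ with $\deg \le \deg_n \le (1+\delta_n)\deg$ on the finite set $X$, $\delta_n \to 0$, and put $A_n = (A,\deg_n)$. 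For each relation $r$ one has $\deg r \le \deg_n r \le (1+\delta_n)\deg r$ and $\deg_n r \to \deg r$ (the total degree of a monomial is an $\N$-linear combination of the $\deg x$, and $\deg r$ is the minimum over the finite support of $r$), so $z_0^{\deg_n r} \le z_0^{\deg r}$ and dominated convergence gives $f_{A_n}(z_0) \to f_A(z_0)$; similarly, writing $N_B(C) := \dim_k\!\big(B/\!\cup_{\beta>C}B_\beta\big)$ for a filtered algebra $B$, comparison of the relevant sets of monomials gives $N_A\!\big(C/(1+\delta_n)\big) \le N_{A_n}(C) \le N_A(C)$, whence $N_{A_n}(C) \to N_A(C)$ off a countable set and, by dominated convergence again, $H_{A_n}(z_0) \to H_A(z_0)$. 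Passing to the limit in $f_{A_n}(z_0)H_{A_n}(z_0) \ge 1$ proves the claim, and the remaining value $z_0 = 1$ is grading-independent ($f_A(1) = 1 - \#X + \#R$, $H_A(1) = \dim_k A$), hence already covered.

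The step I expect to be the main obstacle is this last one. The associated graded algebra $\gr A$ — and with it any Gröbner basis or set of normal monomials — can change discontinuously when $\deg$ is perturbed, since monomials sharing a $\deg$-degree split apart; what does vary continuously is the coarser counting function $C \mapsto N_A(C)$, and the one-sided sandwich $N_A(C/(1+\delta_n)) \le N_{A_n}(C) \le N_A(C)$ is the quantitative expression of this. Approximating $\deg$ \emph{from above}, rather than from both sides, is what keeps the dominations for $H_{A_n}(z_0)$ and for $f_{A_n}(z_0)$ simultaneously under control and avoids any pathology at the boundary of convergence of $H_A$ or $H_R$.
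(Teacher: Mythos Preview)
Your proof is correct and follows the same integer $\to$ rational $\to$ real reduction as the paper; the only structural difference is that the paper first replaces $R$ by a minimal generating subset (so that convergence of $H_A(z_0)$ forces convergence of $H_R(z_0)$), whereas you handle the divergent cases $H_A(z_0)=\infty$ and $H_R(z_0)=\infty$ by direct case analysis at the outset. Your approximation step for real-valued $\deg$---perturbing from above and controlling $H_{A_n}(z_0)$ via the sandwich $N_A(C/(1+\delta_n))\le N_{A_n}(C)\le N_A(C)$---is in fact more careful than the paper's, which simply asserts without further argument that a rational perturbation of $\deg$ changes $f_A(z_0)H_A(z_0)$ by less than any given $\varepsilon$.
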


We deduce this version of GS theorem from the above one in Section~\ref{sec:def_gs} below.
Also, we recall  there a proof of the following corollary (which is as well standard, at least, in its essential parts). 
 
 \begin{cor}
 \label{cor:growth_GS}
 All wGS algebras are infinite-dimensional. 
 Moreover, if either $A$ is a GS algebra  or the set of relation $R$
 is infinite then the associated graded algebra $\gr A$
 has exponential growth.
 \end{cor}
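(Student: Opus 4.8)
\emph{Proof plan.} Everything will be deduced from Theorem~\ref{th:gsII}, which for a wGS algebra $A$ and the point $z_0\in[0,1]$ of Definition~\ref{def:gs} gives $f_A(z_0)H_A(z_0)\ge 1$. For the first assertion I would argue by contradiction: if $\dim_k A<\infty$, then $H_A(z)$ is an ordinary polynomial with non-negative coefficients and constant term $1$ (the degree-zero part of $\gr A$ is $k$), so $H_A(z_0)\ge 1$; but $f_A(z_0)\le 0$ then forces $f_A(z_0)H_A(z_0)\le 0<1$, against Theorem~\ref{th:gsII}. Hence every wGS algebra is infinite-dimensional.

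For the growth statement the first step is a reduction. Since $H_{\gr A}(z)=H_A(z)$ by the definition of the Hilbert series, it suffices to exhibit a real number $z_1\in[0,1)$ at which the generalized series $H_A$ diverges. Indeed, writing $H_A(z)=\sum_{\alpha\ge 0}c_\alpha z^\alpha$ with $c_\alpha\ge 0$ and $D(n)=\sum_{\alpha\le n}c_\alpha=\dim_k\bigoplus_{\alpha\le n}(\gr A)_\alpha$, a Cauchy--Hadamard-type estimate adapted to real exponents — group the exponents into the unit intervals $[m,m+1)$ and bound the corresponding block of $H_A(z_1)$ by $z_1^{m}D(m+1)$ — shows that $\limsup_n D(n)^{1/n}<1/z_1$ would make $H_A(z_1)$ converge; so divergence at $z_1$ yields $\limsup_n D(n)^{1/n}\ge 1/z_1>1$, that is, exponential growth of $\gr A$.

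It then remains to produce $z_1$, and here I would split into the two cases of the hypothesis. If $A$ is GS, I first note that $f_A$ is continuous on the interval $[0,z^{*}]$ on which it converges (with $z^{*}\le 1$): $H_X$ is a finite sum, $H_R$ is a series of non-negative non-decreasing functions convergent at $z^{*}$ hence uniformly on $[0,z^{*}]$, and $f_A(0)=1$. Choosing $z_0\le 1$ with $f_A(z_0)<0$, the intermediate value theorem gives a smallest zero $z_1\in(0,z_0)\subset(0,1)$ with $f_A>0$ on $[0,z_1)$, so Theorem~\ref{th:gsII} yields $H_A(z)\ge 1/f_A(z)\to+\infty$ as $z\uparrow z_1$; as $H_A$ is non-decreasing in $z\ge 0$, this forces $H_A(z_1)=+\infty$. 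If instead $R$ is infinite (and, without loss, $A$ is not GS), then $H_R(1)=\sum_{r\in R}1=+\infty$ while $H_X(1)<\infty$ since $X$ is finite, so $f_A$ diverges at $1$; hence the point $z_0$ of Definition~\ref{def:gs} lies in $[0,1)$, and as $A$ is not GS we must have $f_A(z_0)=0$, so Theorem~\ref{th:gsII} forces $H_A(z_0)=+\infty$ (a finite value would give $0\ge 1$). In both cases $H_A$ diverges at a point of $[0,1)$, which by the reduction completes the proof.

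The only real obstacle I anticipate is the pair of places where the $\R_{>0}$-graded (Hahn series) setting must be handled with care: the Cauchy--Hadamard estimate over real exponents in the reduction step, and the continuity of $f_A$ up to the right endpoint of its interval of convergence, which is what lets me locate the first zero of $f_A$ and pass to the limit $H_A(z)\to+\infty$. Once those are settled, the remainder is bookkeeping.
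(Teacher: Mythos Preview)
Your argument is correct and follows essentially the same strategy as the paper: locate the first zero of $f_A$ in $[0,1]$ via the intermediate value theorem, use Theorem~\ref{th:gsII} to force $H_A$ to diverge there, and then convert divergence at a point of $[0,1)$ into exponential growth; the case $z_0=1$ is ruled out exactly when $A$ is GS or $R$ is infinite.

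The only noteworthy differences are packaging. You split the infinite-dimensionality off as a clean preliminary contradiction, whereas the paper lets it fall out of the divergence of $H_A$. For the growth conversion, the paper bounds $H_A(z^{1/D})$ against the word-length Hilbert series $H(z)$ using $D=\max_i\deg x_i$, while you block exponents into unit intervals to run a Cauchy--Hadamard estimate on $D(n)=\sum_{\alpha\le n}c_\alpha$; these are equivalent (since $D(n\epsilon)\lesssim h_n\lesssim D(nD)$ with $\epsilon=\min_i\deg x_i$), but note that ``exponential growth of $\gr A$'' in the paper is stated for the word-length function $h_n$, so you should record this comparison explicitly. Finally, the paper first passes to a minimal relation set to guarantee convergence of $f_A$ wherever $H_A$ converges; you sidestep this by reading convergence of $f_A(z_0)$ directly off Definition~\ref{def:gs}, which is a mild simplification.
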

 
 
The analogous notion of (generalized) Golod--Shafarevich group 
is widely 
used in various  constructions 
of groups with exotic  properties~\cite{e}. One of recent constructions is a so-called ``Ershov--Jaikin  Monster'', that is, an example  of an infinite finitely generated residually finite
$p$-torsion group in which every infinite finitely generated subgroup has finite index. 
This group is constructed as a direct limit of groups having  
generalized Golod--Shafarevich subgroups of finite index~\cite[Th.~1.5]{ej} 
(see also~\cite[Sec.~13]{e}). However, it is unknown 
if such a monster itself can be generalized Golod--Shafarevich or not. 

Motivated  by a question of Efim Zelmanov, we discuss here a direct limit of (w)GS algebras. 
Let $\phi: A\to B$ be a surjection of algebras, and suppose that $X$ and $Y= \phi(X)$ be some generating sets of $A$ and $B$. If $B = k\la X| R\ra$ is (w)GS then it is easy to see 
that $A$ is as well (w)GS w.~r.~t. some presentation $A = k\la Y |S \ra $  such that $\tilde \phi(R) \subset S$  and some degree function satisfying   $\deg \phi(x) = \deg x$ 
 for all $x\in X$, where $\tilde \phi$ is an extension of $\phi$ to the morphism of free algebras
  on $X$ and $Y$. We call here such degree functions and presentations of algebras $A$ and $B$ {\em compatible}.

The next theorem is the main result of this note. 

\begin{theorema}[Theorem~\ref{th:1}, Corollary~\ref{cor:1}]
\label{th:main}
Let $A_0 \to A_1 \to \dots$ be a sequence of surjections of algebras which are wGS with respect to compatible presentations and degree functions.  Then the limit
$$
A = \lim_{n\to \infty} A_n
$$
is wGS with respect to the induced degree function. 
Moreover, $A$ has exponential growth provided that almost all algebras $A_n$ are GS w.~r.~t. the above degree function. 
\end{theorema}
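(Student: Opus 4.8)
The plan is to carry the common data of the compatible presentations through to the limit and then to locate, by a compactness argument, a single point at which $f_A$ is $\le 0$. Because the presentations and degree functions are compatible, all the $A_n$ share one finite generating set $X$ and one degree function $\deg\colon X\to\R_{>0}$, and the induced lifts $\t\phi_n\colon k\la X\ra\to k\la X\ra$ are degree-preserving automorphisms with $\t\phi_n(R_n)\subseteq R_{n+1}$. Composing these automorphisms and transporting each presentation back along them to a single copy of $k\la X\ra$ changes neither the Hilbert series nor the property of being (w)GS, so I may assume $R_0\subseteq R_1\subseteq\cdots$; then $A=k\la X\mid R\ra$ with $R=\bigcup_n R_n$, and the induced degree function is $\deg$. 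Since $H_X$ is common to all $n$ and the $H_{R_n}$ increase coefficientwise to $H_R$, it is enough to exhibit a single $z_0\in(0,1]$ at which $H_R(z_0)<\infty$ and $f_A(z_0)=1-H_X(z_0)+H_R(z_0)\le 0$; the one genuine difficulty is that the point at which $f_{A_n}$ becomes $\le 0$ need not be independent of $n$.

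For each $n$ put $V_n=\{\,z\in[0,1]\colon H_{R_n}(z)<\infty\text{ and }f_{A_n}(z)\le 0\,\}$. Each $V_n$ is nonempty since $A_n$ is wGS, and $V_0\supseteq V_1\supseteq\cdots$, because $R_n\subseteq R_{n+1}$ forces convergence of $H_{R_{n+1}}$ at a point to imply that of $H_{R_n}$ there together with $f_{A_n}\le f_{A_{n+1}}$. As $X$ is finite and its generators have positive degree, there is $\delta>0$, depending only on $(X,\deg)$, with $H_X(z)<1$ on $[0,\delta)$, where $f_{A_n}(z)\ge 1-H_X(z)>0$; hence every $V_n\subseteq[\delta,1]$. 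The closures $\overline{V_n}$ in $[0,1]$ therefore form a nested family of nonempty closed subsets of the compact interval $[\delta,1]$, so $\bigcap_n\overline{V_n}$ contains a point $w$.

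The next step is to verify that passing to closures has lost nothing, i.e.\ that $w\in V_n$ for every $n$. Fix $n$ and let $\rho_n\in(0,\infty]$ be the radius of convergence of $H_{R_n}$ (positive because $V_n\neq\emptyset$); every point of $V_n$ lies in $[0,\rho_n]$, so $w\le\rho_n$. On $[0,\rho_n)$ the series $H_{R_n}$ converges uniformly on compacta, so $f_{A_n}$ is continuous there; if $w<\rho_n$, then $H_{R_n}(w)<\infty$ and $f_{A_n}(w)=\lim_{V_n\ni z\to w}f_{A_n}(z)\le 0$. If $w=\rho_n$, choose $z^{(k)}\in V_n$ with $z^{(k)}\to\rho_n$; if some $z^{(k)}=\rho_n$ we are done, and otherwise monotonicity of $H_{R_n}$ gives $H_{R_n}(z^{(k)})\to H_{R_n}(\rho_n)$, which must be finite (else $f_{A_n}(z^{(k)})\to+\infty$, contradicting $f_{A_n}(z^{(k)})\le 0$), so that $f_{A_n}(\rho_n)=\lim_k f_{A_n}(z^{(k)})\le 0$. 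In every case $w\in V_n$.

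Hence $w\in\bigcap_n V_n$, so $H_{R_n}(w)\le H_X(w)-1$ for all $n$; summing the nonnegative series over the increasing union $R=\bigcup_n R_n$ gives $H_R(w)=\lim_n H_{R_n}(w)\le H_X(w)-1<\infty$, whence $f_A(w)$ converges and $f_A(w)=\lim_n f_{A_n}(w)\le 0$. Thus $A$ is wGS with respect to $(X,R)$ and $\deg$. Finally, suppose almost all $A_n$ are GS. If the chain stabilizes, then $A=A_N$ for some large $N$, which is GS, so $\gr A$ has exponential growth by Corollary~\ref{cor:growth_GS}. If it does not stabilize, then infinitely many of the surjections $A_n\to A_{n+1}$ fail to be injective, each of which forces $R_n\subsetneq R_{n+1}$, so $R$ is infinite; it still has only finitely many elements of each degree (as $H_R(w)<\infty$ and $w>0$), hence is an admissible relation set with respect to which $A$ is wGS, so again $\gr A$ has exponential growth by Corollary~\ref{cor:growth_GS}. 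The main obstacle is the one flagged in the first paragraph — replacing the possibly $n$-dependent witnesses of the wGS property by one point $z_0$ — and it is overcome by the compactness argument together with the elementary continuity/monotonicity analysis of $f_{A_n}$ near its radius of convergence, which guarantees that the closures $\overline{V_n}$ consist of good points.
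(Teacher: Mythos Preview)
Your argument is correct and shares the paper's core idea: intersect a nested family of ``good'' subsets of $[0,1]$ and use compactness. The technical execution differs. The paper first replaces $\{A_n\}$ by an auxiliary sequence $\{A'_n\}$ obtained by truncating the relation set to degrees $\le n$; this makes each $f_{A'_n}$ a genuine polynomial, so the sets $S_n=\{z:f_{A'_n}(z)\le 0\}$ are closed for free and the compactness step is immediate. You instead keep the possibly infinite $R_n$, take closures $\overline{V_n}$, and then recover closedness by the monotonicity/radius-of-convergence analysis. Both work; the paper's truncation buys a one-line compactness argument at the price of an extra reduction, while your route is more direct but needs the boundary analysis you carried out.

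For the exponential-growth clause the two proofs genuinely diverge. The paper argues by contradiction from the Golod--Shafarevich inequality $H_A(z)f_A(z)\ge 1$: subexponential growth forces $f_A>0$ on $[0,1)$, so the wGS witness must be $z_0=1$ with $f_A(1)=0$, which pins $R$ down as finite and hence equal to some truncation $R'_n$, whence $A=A'_n$ is GS, a contradiction. Your case split (stabilising vs.\ not) and appeal to Corollary~\ref{cor:growth_GS} is a legitimate alternative. One point to tighten: in the non-stabilising branch you conclude that $R=\bigcup R_n$ is infinite and invoke the corollary, but the corollary's proof passes to a \emph{minimal} relation set, so you should note that a minimal $R'\subseteq R$ is also infinite --- otherwise $R'\subseteq R_N$ for some $N$, giving $(R_N)=(R)$ and forcing the chain to stabilise after $N$, contrary to hypothesis.
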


We have observed in Theorem~\ref{th:main} that the na\"ive way to transfer the construction of Ershov and Jaikin 
does not lead to Smoktunowicz-type examples of (nil) algebras with exotic properties, if we 
try to obtain new algebras as a direct limits of GS ones. Note also that the above construction of Ershov and Jaikin and  is based on transferring the GS property froma group to its subgroups of finite index. It seems that there is no analogous statements for algebras~\cite{voden}.   

The assumption of Theorem~\ref{th:main} cannot be weakened in any obvious way.
For instance, if we just assume that  the algebras $A_n$ are either GS w.~r.~t. arbitrary degree functions, or have exponential growth, or contain free nonabelian subalgebras, then the algebra $A$ can be non-wGS. This is shown in  Example~\ref{ex2}. If we change the degree function while transferring to the next element of the sequence, the limit may occur far from being GS.  

\subsection*{Acknowledgement}

This text 
is motivated 
  by a question of Efim Zelmanov stated in his talk in the 2012 workshop 
   {\em Golod-Shafarevich Groups and Algebras
and Rank Gradient}.  I am grateful to him and to Mikhail Ershov for the possibility
to participate in the workshop. I am grateful also to ESI, Vienna, for the hospitality during it. My special thanks to Andrei Jaikin for valuable remarks and for dramatical simplification of my original arguments.


\section{On the definition of  Golod--Shafarevich algebras}
\label{sec:def_gs}

Given a set $X = \{ x_1, x_2, \dots \}$ and a function $\deg:X\to \R_{\ge 0}$,
let us denote by $\widetilde X = \{ \widetilde x_1, \widetilde x_2, \dots \}$
a set of formal variables to which we assign the same degrees $\deg \tilde x_i = \deg x_i$.
Let $A$ be an algebra generated by its subset $X$ subject to the relations $R \subset F$, where either $F =  k\la  \widetilde X \ra $ or $F =  k\la \la  \widetilde X \ra \ra $. 

First we observe that it is sufficient to check the GS condition under the assumptions that the sets $X$ and $R$ are minimal in some sense. 

\begin{prop}
\label{prop:min4gs}
Suppose that an algebra $A = k\la X | R \ra$ as above is (w)GS w.~r.~t. a degree function 
$\deg: X\to \R_{\ge 0}$. 

a. Let  $R' \subset R$ be any subset. Then the algebra $B = k\la X | R' \ra$ is (w)GS w.~r.~t.
the same degree function on $X$. For example, if $R'$ generates the same ideal in $F$ as $R$,
then $A$ is (w)GS w.~r.~t. the presentation $A = k\la X | R' \ra$.

b. Let $X' \subset X$ be a subset generated the same algebra $A$, and let $F' =  k\la  \widetilde {X'} \ra $ or, respectively, $F' =  k\la \la  \widetilde X' \ra \ra $. Suppose that $R = R' \cup \bar R$,
where $R' \subset F'$ and $\bar R = \{ r_x| x\in X \setminus X' \}$ with $r_x = x- f_x, f_x \in F'$.
Then $A$ is (w)GS w.~r.~t. the presentation $A = k\la X' | R' \ra$ for the degree function  $\deg' : X' \to  \R_{\ge 0}$ which is a restriction of $\deg: X\to \R_{\ge 0}$.
\end{prop}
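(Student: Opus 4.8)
The plan is to reduce everything to bookkeeping with the Hahn series $f_A(z) = 1 - H_X(z) + H_R(z)$ and the definition of (w)GS, which only asks for a point $z_0 \in [0,1]$ at which the relevant series converges and $f_A(z_0)$ is $<0$ (resp.\ $\le 0$). Throughout I fix such a point $z_0$ witnessing that $A = k\la X\mid R\ra$ is (w)GS.

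For part (a): since $R' \subset R$ is a discrete subset in which only finitely many elements have any given degree, $H_{R'}(z) = \sum_{r\in R'} z^{\deg r}$ is a well-defined Hahn series with nonnegative coefficients, and coefficient-wise $H_{R'}(z) \le H_R(z)$. Hence $f_B(z) = 1 - H_X(z) + H_{R'}(z) \le 1 - H_X(z) + H_R(z) = f_A(z)$ coefficient-wise, so $f_B(z_0)$ converges (being dominated by a convergent series of nonnegative terms, after separating the fixed polynomial-type part $1-H_X$) and $f_B(z_0) \le f_A(z_0)$. In the GS case $f_A(z_0)<0$ forces $f_B(z_0)<0$; in the wGS case $f_A(z_0)\le 0$ forces $f_B(z_0)\le 0$. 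The ``for example'' clause is immediate: if $R'$ and $R$ generate the same two-sided ideal then $k\la X\mid R'\ra \cong k\la X\mid R\ra = A$ as filtered algebras, so this is literally a presentation of $A$, and it is (w)GS by what was just shown. The one point needing a word of care is that $1 - H_X(z_0)$ itself makes sense as a real number (the series $H_X(z_0)$ converges); but this is part of the hypothesis that $f_A(z_0)$ converges, since $H_R$ has nonnegative coefficients.

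For part (b): here the generating set shrinks from $X$ to $X'$, so we compare $f_A(z) = 1 - H_X(z) + H_R(z)$ with $f_{A'}(z) = 1 - H_{X'}(z) + H_{R'}(z)$, where $A' = k\la X'\mid R'\ra$. Write $W = X\setminus X'$. Then $H_X(z) = H_{X'}(z) + H_W(z)$ and $H_R(z) = H_{R'}(z) + H_{\bar R}(z)$. The key observation is that the ``eliminating'' relations $\bar R = \{r_x = x - f_x : x\in W\}$ are in bijection with $W$ and, since $\deg r_x \ge \min(\deg x, \deg f_x)$, one does not immediately get $H_{\bar R} = H_W$ coefficient-wise. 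However $\deg r_x \le \deg x$ always (the leading term $x$ of degree exactly $\deg x$ appears), hence $z_0^{\deg r_x} \ge z_0^{\deg x}$ for $z_0 \in [0,1]$, giving $H_{\bar R}(z_0) \ge H_W(z_0)$ as real numbers. Therefore
$$
f_{A'}(z_0) = 1 - H_{X'}(z_0) + H_{R'}(z_0)
= f_A(z_0) + \big(H_W(z_0) - H_{\bar R}(z_0)\big) \le f_A(z_0),
$$
and convergence of $f_{A'}(z_0)$ follows since each of $H_{X'}, H_{R'}$ is dominated by the corresponding convergent series for $A$. As before, $f_A(z_0) < 0$ (resp.\ $\le 0$) then yields $f_{A'}(z_0) < 0$ (resp.\ $\le 0$), so $A' = A$ is (w)GS w.r.t.\ the restricted degree function $\deg'$.

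The main obstacle, such as it is, is purely one of care with the definitions rather than depth: one must make sure that ``converges at $z_0$'' is handled correctly for Hahn series with possibly infinitely many terms below each threshold, i.e.\ that dominating a series of nonnegative reals by a convergent one gives convergence, and that the inequality $\deg r_x \le \deg x$ (so that passing to $z_0\le 1$ flips it the right way) is exactly what makes part (b) go through. Everything else is termwise comparison of nonnegative-coefficient series.
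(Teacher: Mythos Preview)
Your argument is correct and follows essentially the same approach as the paper's. For part~(a) both proofs simply observe $f_B(z_0)\le f_A(z_0)$ from $R'\subset R$; for part~(b) the paper reduces by induction to the case $|X\setminus X'|=1$ and then uses exactly your inequality $\deg r_x\le\deg x$ to obtain $z_0^{\deg r_x}-z_0^{\deg x}\ge 0$, while you handle the general $W=X\setminus X'$ directly, which is if anything a bit cleaner. One minor point you pass over silently: the series $H_{R'}$ enters $f_A$ with degrees computed in $F$ and enters $f_{A'}$ with degrees computed in $F'$ via $\deg'$, so strictly speaking one must note that these coincide on $R'\subset F'$ (immediate, since a free-algebra element has a unique monomial expansion); the paper does flag this step.
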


\begin{proof}
a. It follows immediately from Definition~\ref{def:gs} that $f_B(z_0 ) \le f_A(z_0)$ for all $z_0\in [0,1]$.
So,  $B$ is (w)GS provided that $A$ is.

b. 
 Let $f(z)$ and $\bar f(z)$ be the series $f_A(z)$ for the presentations $A = k\la X | R \ra$ and $A = k\la X' | R' \ra$, respectively. It is sufficient to show that $\bar f(z_0) \le f(z_0)$ for all $z_0\in [0,1]$.
 By induction, we can assume that the set  $X \setminus X'$ consists of a single element $x$.
 
 Let $d = \deg x$ and $d' = \deg' f(x)$. We have 
 $$
 f(z) = 1 - H_{X'}(z) - z^{d} +z^{\deg r_x} + \sum_{r\in R'} z^{\deg r}.  
 $$
 Here  $\deg'r_x = \min\{d,d' \}$, so that for all  $z_0 \in [0,1]$ we have 
 $z_0^{\deg' r_x} - z_0^{d} \ge 0$. 
 Note that for each $y\in F$ we have $\deg y \le deg' y$. 
 Thus, for each $z_0 \in [0,1]$ 
 $$
  f(z_0) \le 1 - H_{X'}(z_0) +\sum_{r\in R'} z_0^{\deg' r} = \bar f(z_0)
 $$ 
\end{proof}

\begin{exam}
\label{exam_add_new_var}
Note that the condition $R'\subset F'$ is essential in Proposition~\ref{prop:min4gs}b.
Indeed, 
consider an algebra $A$ with the minimal presentation $A =k\la X' | R' \ra$, where 
 $X' = \{ x,y \}$, $R' = \{ (xy)^2x, (xy)^2y, x(xy)^2 \}$.
This algebra is not (w)GS w.~r.~t. the standard degree function $\deg x= \deg y =1$
(since the polynomial $f_A(z) = 1-2z+3z^5$ has no positive roots). Still, if we add a new
variable $v = xy$ and put $\deg v = 3$, for the non-minimal presentation
$$A = k\la x,y,v | v - xy, v^2x, v^2y, xv^2 \ra
$$ 
we have 
$$
f_A(z) = 1-2z-z^3+z^2+3z^7.
$$
For this presentation $A$ is GS, since $f_A(0.7)<0$.
\end{exam}


\begin{proof}[Proof of Theorem~\ref{th:gsII}]
Let $R' \subset R$ be a subset of $R$ which minimally generates the same ideal of relations of $A$. Then $f(z) = 1-H_X(z)+H_R(z) \ge 1-H_X(z)+H_{R'}(z)$
 for all positive $z$. So, it is sufficient to prove Theorem~\ref{th:gsII} in the case $R = R'$. In this case, it is easy to see that 
 the series $H_R(z_0)$ converges for some positive $z_0$ provided that the series $H_A(z_0)$
 converges. It follows that  the series $f_A(z_0)$ converges too.

Let us show that $f_A(z_0) H_A(z_0) \ge 1$ for all real $z_0\in [0,1]$ for which the
 series $ H_A(z)$ converges. If the degree function is integer-valued, this is obvious from Golod--Shafarevich theorem. If the degree function is rational-valued, then the Golod--Shafarevich theorem gives a coefficient-wise inequality 
$$
      \frac{f_A(z)}{1-z^{1/q}} H_A(z) \ge \frac{1}{1-z^{1/q}}
 $$ 
 in $\Q[[z^{1/q}]]$, where $q$ is a common denominator of the degrees of the generators,  so that that $f_A(z_0) H_A(z_0) \ge 1$. 
 Now, it remains to observe that
 given $\varepsilon >0$, any degree function can be replaced by a  rational-valued one such that the maximal 
 value of the product $f_A(z_0) H_A(z_0)$ for $z_0\in [0,1]$  is changed less than by $\varepsilon$.
\end{proof}

\begin{proof}[Proof of Corollary~\ref{cor:growth_GS}]
 Note that if an algebra $A$ is  (w)GS 
 w.~r.~t. some choice of the generators, the relations, and the degree function, 
 then it should remain to be w(GS) if we replace the set $R$
 by any its subset which minimally generates the same ideal. So, we will assume that the set $R$ minimally generates the ideal of relations of $A$. As above, it follows that 
 the series  $f_A(z_0)$ converges for some positive $z_0$ provided that the series $H_A(z_0)$
 converges.

 Let $D >0$ be the maximal value of the degrees of the generators $x\in X$. Then 
 the  words on $X$ of length  at most $n$  have degree at most $Dn$, so that 
 the number $h_n$ of linearly independent ones is not less then $\dim (A/A_{>nd})$.
 The last number is the sum of the coefficients $a_\gamma$ of the series $H_A(z)$ with $\gamma \le nd$. Let $H(z) = \sum_{n\ge 0} h_n z^n$ be the Hilbert series of $A$ w.~r.~t. the standard filtration. Then we have the inequalities of series (under the assumption $z\in (0,1)$): 
$$
H_A(z^{1/D}) = \sum_{\gamma>0} a_\gamma z^{\gamma/D}
= 1+ \sum_{n\ge 0} \sum_{nD< \gamma \le (n+1) D} a_\gamma z^{\gamma/D}
$$
$$
\le 1+ \sum_{n\ge 0} z^n \sum_{nD< \gamma \le (n+1) D} a_\gamma  \le 
1+ \sum_{n>0} (h_{n+1}-h_{n})z^n = \frac{1-z}{z}(H(z)-1).
$$ 
 
 Now, suppose that $z_0$ is the minimal positive real number such that  $f(z_0) = 0$. 
 Then the series $H_A(z_0)$ diverges. 
If $z_0<1$, it follows from the above inequalities that 
 the formal power series $H(z)$ diverges at $z = z_0^{1/D} <1$, so that its coefficients $h_n$ grow exponentially.  So, one can assume that $z_0=1$ and
 $f_A(z) > 0$ for all $z\in [0,1)$. It follows that 
 $A$ is not GS (whereas $A$ is still wGS) and
 $$
        f_A(1) =  1- \Card X +  \Card R = 0,
 $$
 that is, the set $R$ is finite.
 \end{proof}

\section{Golod--Shafarevich algebras}
\label{sec:res}

Let $B = k \la X|R\ra$ be an algebra with a degree function $\deg$ (induced by a degree function on the (completed) free algebra on the finite set $X$), and let $\phi: B\to C$
be a surjection of filtered algebras, where $C = \la \phi(X) | R' \ra $.
The surjection $\phi$ induces a unique degree function on $C$ with respect to the generating set $\phi(X)$.

\begin{theorema}
\label{th:1}
Let $\{A_i\}_{i\ge 0} $ be a sequence of wGS algebras with surjective maps $\phi_i:A_i \to A_{i+1}$ with the degree function on $A_i$ induced by the degree function on $A_0$, 
and let 
$A= \lim A_i$  be its direct limit. Then $A$ is wGS with respect to the induced degree function.
\end{theorema}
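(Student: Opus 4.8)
The plan is to exploit the compatibility of the presentations to make the Golod--Shafarevich series of the $A_n$ monotone in $n$, and then to locate, by a compactness argument, a single point at which all of them --- and hence the Golod--Shafarevich series of the limit --- is nonpositive. First, unwinding the definition of the induced degree functions and using compatibility of the presentations, I may assume that every $A_n$ is presented as $A_n = k\la X \mid R_n \ra$ over one fixed free (or completed free) algebra $F$ on a fixed finite set $\widetilde X$, with one fixed degree function, in such a way that $R_0 \subseteq R_1 \subseteq \cdots$ and each $A_n$ is wGS with respect to $R_n$. Since the transition maps are surjective, the relation ideals in $F$ increase, so $A = \lim A_n = k\la X \mid R_\infty \ra$ with $R_\infty = \bigcup_n R_n$, carrying exactly the induced degree function. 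Write $f_n(z) = f_{A_n}(z) = 1 - H_X(z) + H_{R_n}(z)$; here $H_X(z) = \sum_{x\in X} z^{\deg x}$ is a finite sum, hence continuous on $[0,1]$, while $H_{R_n}(z) = \sum_{r\in R_n} z^{\deg r}$ has nonnegative coefficients. Because $R_n \subseteq R_{n+1}$ we have $H_{R_n}(z) \le H_{R_{n+1}}(z)$ for all $z \ge 0$, so $f_n \le f_{n+1}$ wherever both are defined, and convergence of $f_{n+1}$ at a point forces convergence of $f_n$ there.

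Next I would set $G_n = \{\, z\in[0,1] : H_{R_n}(z) < +\infty \text{ and } f_n(z)\le 0 \,\}$. Each $G_n$ is nonempty since $A_n$ is wGS, and $G_{n+1}\subseteq G_n$ by the monotonicity just noted. I claim each $G_n$ is closed in $[0,1]$: the series $H_{R_n}$ converges and is continuous and nondecreasing on an interval $[0,\rho_n)$ and has a (possibly infinite) monotone limit at $\rho_n$; hence $f_n$ is continuous on $[0,\rho_n)$, its left limit at $\rho_n$ exists in $(-\infty,+\infty]$, and a nonpositive value of that left limit is automatically finite --- whence $H_{R_n}$ then converges at $\rho_n$, with $\rho_n\in G_n$ in that case. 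Thus each $G_n$ is a nonempty compact subset of $[0,1]$, and a decreasing sequence of such sets has nonempty intersection; fix $z_*\in\bigcap_n G_n$. Then $H_{R_n}(z_*)\le H_X(z_*)-1$ for every $n$, so, since $R_\infty$ is the increasing union of the $R_n$, $H_{R_\infty}(z_*) = \lim_n H_{R_n}(z_*) \le H_X(z_*)-1 < +\infty$. As $z_*>0$ (because $f_n(0)=1>0$), this forces $R_\infty$ to contain only finitely many relations of each given degree, so that $f_A$ is a well-defined series convergent at $z_*$, and
$$
f_A(z_*) = 1 - H_X(z_*) + H_{R_\infty}(z_*) \le 1 - H_X(z_*) + \bigl(H_X(z_*) - 1\bigr) = 0 .
$$
Therefore $A$ is wGS with respect to the induced degree function.

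The main obstacle is the first step together with the monotonicity it buys: it is precisely the compatibility of the presentations that makes the $f_n$ comparable and the sets $G_n$ nested, and without it the conclusion genuinely fails (cf.\ Example~\ref{ex2}). The remaining ingredients --- the closedness of the $G_n$ up to the boundary of their regions of convergence, and the final compactness argument --- are routine real analysis.
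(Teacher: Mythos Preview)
Your argument is correct and follows the same core strategy as the paper: reduce to a fixed generating set, observe that the Golod--Shafarevich series $f_n$ are monotone in $n$, form the nested nonempty closed sets $\{z\in[0,1]:f_n(z)\le 0\}$, and extract a common point by compactness of $[0,1]$.

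The only notable difference is in how closedness is obtained. The paper first replaces the sequence $\{A_i\}$ by $\{A'_n\}$ with $A'_n=F/(R^{(n)})$, so that each relation set is finite and each $f_n$ is a genuine polynomial; closedness of $S_n=\{f_n\le 0\}$ is then immediate. You instead keep the original (possibly infinite) relation sets and argue closedness of your $G_n$ directly via the behaviour of $H_{R_n}$ at the boundary of its interval of convergence. Your route is slightly more self-contained, but the paper's truncation $R'_n=R^{(n)}$ is not wasted work: it is reused in the proof of Corollary~\ref{cor:1}, where one needs that a finite $R$ actually coincides with some $R'_n$.
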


\begin{proof}
Let  $A_i = \la X_i|R_i \ra$ be presentations of the algebras $A_i$ via minimal generators and relations 
such that $X_{i+1} \subset \phi_i(X_i)$, and let  $A=\la X|R \ra$ be the corresponding presentation of $A$
such that $X = \lim X_i$. Since the descending chain $X_1(z) \ge X_2(z) \ge \dots$ of polynomials with positive coefficients $X_i(z)$ should stabilize,  for some $n\ge 0$ we have $X_n(z) = X_{n+1}(z) = \dots = X(z)$. 
So, we can (and will) assume that all algebras $A_1, A_2, \dots $ and $A$ have the same minimal set of generators $X$ such that $\phi|_X = \id$.

Now, each map $\phi_i$ induces an inclusion $(R_i) \subset (R_{i+1}) \subset (R)$ of ideals in the free algebra 
$F= k\la X \ra $ (or, respectively, $F = k\la\la X \ra\ra$ ). Given a subset $S\subset F$, let $S^{(n)} = S \setminus F_{> n}$ denotes the subset of its elements of degree at most $n$. Fix $n\ge 0$. 

 Since $\lim_i (R_i)^{(n)} = (R)^{(n)}$ is a limit of finite-dimensional vector spaces, we have $(R_i)^{(n)} = (R)^{(n)}$ for all $i>>0$. Hence there exists $m = m(n)$
such that $R_i^{(n)} = R^{n}$ for all $i\ge m$. Note that if the sequence $\{ A_i \}$ is not eventually constant then $\lim_{n\to \infty} m(n) = \infty$. 

Let $R'_n = R^{(n)}$.  Then $\lim (R'_n) = (R)$ (where the direct limit is defined by the natural inclusions $R'_n \subset R'_{n+1}$), so that  $A = \lim F/(R_n)$. Moreover, since $R'_n = R_{m(n)}^{(n)}\subset R_{m(n)}$, the algebras $A'_n = F/(R_n)$ are wGS 
 (because of the following trivial corollary of the (w)GS condition: if a quotient $C $ of an algebra $B$ is (w)GS and the minimal set of generators $X$ is the same for the both algebras, then  $B$ is (w)GS as well)\footnote{Analogously, $A'_n$ is GS provided that $A_{m(n)}$ is. This fact is used in Corollary~\ref{cor:1} below.}. 
So, without loss of generality we replace the sequence $\{ A_i \}$ by the sequence $\{ A_n' \}$, 
that is,  
we assume that each $R_n(z)$ is a finite sum of powers of $z^\alpha$  with $\alpha \le n$ such that $R(z) = R_n(z) +o(z^n)$.

Let $f_i(z) = 1-X(z)+R_i(z)$. 
Consider the set $S_i = \{ z \in [0,1] | f_i(z) \le 0 \}$ 
(where we assume $+\infty > 0$). The set $S_i$ is a finite union of closed intervals (moreover,  it follows from the Laguerre theorem~\cite{lag} that 
the number of the intervals is at most $\Card X +1$). Since $R_{i+1}(z) \ge R_(z)$ 
for all $z\in [0,1]$ and all $n\ge 0$, we obtain a descending chain 
$S_0\supset S_1 \supset $ of closed subsets of the compact set $[0,1]$. It has a common point 
$z_0\in [0,1]$. We have $f_i(z_0)\le 0$, so that $f(z_0)=\lim_{i\to +\infty}f_i(z_0)\le 0$. Thus,  $A$ is wGS.
\end{proof}

\begin{cor}
\label{cor:1}
In the notation of Theorem~\ref{th:1}, suppose that almost all algebras $A_i$ are GS.
Then $A$ has exponential growth.
\end{cor}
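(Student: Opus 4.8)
The plan is to combine Corollary~\ref{cor:growth_GS} with the key technical observation already made in the proof of Theorem~\ref{th:1}. Recall from that proof that after passing to the auxiliary sequence $\{A'_n\}$ we may assume each $A_n = F/(R_n)$ with $R_n(z) = R(z) + o(z^n)$, and that (by the footnote) $A'_n$ is GS whenever $A_{m(n)}$ is; since almost all $A_i$ are GS and $m(n)\to\infty$, almost all of the replacement algebras $A'_n$ are GS as well. Thus without loss of generality every $A_n$ in the sequence is GS, i.e.\ for each $n$ there is a point $z_n\in[0,1]$ with $f_n(z_n) := 1 - X(z_n) + R_n(z_n) < 0$.

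First I would recall the sets $S_i = \{z\in[0,1] \mid f_i(z)\le 0\}$ from the proof of Theorem~\ref{th:1} and note that they form a descending chain of nonempty compact sets, so their intersection contains a point $z_0$. If $z_0<1$, then since the series $H_A(z)$ must diverge at $z_0$ (by Theorem~\ref{th:gsII}, as $f_A(z_0)\le 0$ forces $f_A(z_0)H_A(z_0)\ge 1$ to fail unless $H_A(z_0)=+\infty$), the argument in the proof of Corollary~\ref{cor:growth_GS} applies verbatim: the standard-filtration Hilbert series $H(z)$ of $A$ diverges at $z_0^{1/D}<1$ for $D$ the maximal generator degree, hence the coefficients $h_n$ grow exponentially and $A$ has exponential growth. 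So the real content is to rule out the case $z_0 = 1$, i.e.\ to show the common point can be taken strictly inside $[0,1)$.

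Here is where GS-ness (rather than mere wGS-ness) of the $A_n$ enters, and this is the step I expect to be the main obstacle. The issue is that $S_i\ni z_0=1$ could happen with each individual $S_i$ also touching $1$; one must exploit $f_n(z_n)<0$ at some interior point $z_n$. I would argue as follows: each $S_i$ is a finite union of at most $\Card X + 1$ closed intervals (by the Laguerre bound cited in the proof of Theorem~\ref{th:1}), and $f_i(1) = 1 - \Card X + \Card R_i$. If some $f_i(1) \le 0$ then the limiting $A_i$ already had $\Card R_i \ge \Card X - 1 < \infty$ relations and, being GS with $z_i$ strictly less than $1$ possible, one checks directly (comparing with the finite-relations case of Corollary~\ref{cor:growth_GS}) that $A_i$ — hence its quotient $A$ — already has exponential growth. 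Otherwise $f_i(1) > 0$ for all $i$, so $1\notin S_i$, and the common point $z_0$ of the descending chain lies in $[0,1)$, putting us back in the case handled in the previous paragraph. Either way $A$ has exponential growth.

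The delicate point to get right is the dichotomy in that last step: one needs that ``$S_i$ misses $1$ for all $i$'' genuinely follows from, or can be reduced to, the GS hypothesis, and that the boundary case $f_i(1)\le 0$ is harmless. An alternative, cleaner route avoiding the Laguerre count would be to observe that GS-ness of $A_n$ gives a point where $f_n$ is strictly negative and then to use a uniform-lower-bound / equicontinuity argument on the compact $[0,1]$ to produce an interior accumulation point of the $z_n$ at which $\liminf f_n \le 0$ while staying bounded away from $1$; I would try that first and fall back on the interval-counting argument only if the limiting behavior near $1$ proves stubborn.
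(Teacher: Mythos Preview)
Your argument has a genuine gap in Case A of the dichotomy. When you write that ``$A_i$ --- hence its quotient $A$ --- already has exponential growth,'' this implication is simply false: exponential growth does not pass to quotients. (The free algebra on two generators has exponential growth, yet admits $k$ as a quotient.) So knowing that some individual $A_i$ has exponential growth tells you nothing about the growth of the limit $A$. Incidentally, the inequality should read $\Card R_i \le \Card X - 1$, not $\ge$.

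Your dichotomy is also misaligned with the goal. To force $z_0<1$ you need $1\notin\bigcap_i S_i$, i.e.\ $f_i(1)>0$ for \emph{some} $i$; the complementary case is $f_i(1)\le 0$ for \emph{all} $i$, not just for some $i$. With the corrected split, the second case gives $\Card R_i \le \Card X - 1$ for every $i$, hence $R=\bigcup R_i$ is finite, so the sequence stabilizes and $A=A'_n$ is itself GS --- and now Corollary~\ref{cor:growth_GS} applies to $A$ directly, not via a quotient argument.

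The paper's proof sidesteps the $S_i$ machinery entirely. It argues by contradiction: if $A$ has subexponential growth then $H_A(z)$ converges on $[0,1)$, so the Golod--Shafarevich inequality forces $f_A(z)>0$ there; combined with the wGS conclusion $f_A(z_0)\le 0$ from Theorem~\ref{th:1}, this pins down $z_0=1$ and $f_A(1)=0$, whence $R$ is finite and $A=A'_n$ is GS --- contradicting $f_A\ge 0$ on all of $[0,1]$. This is the same ``finite $R$ $\Rightarrow$ sequence stabilizes'' idea, reached more directly.
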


\begin{proof}
Suppose that $A$ has subexponential growth. Then the series $A(z)$ converges for all $0\le z< 1$. 
By the Golod--Shafarevich inequality $A(z)f_A(z) \ge 1$, we have  $f_A(z)>0$
for all $z\in [0,1)$. 
On the other hand,  Theorem~\ref{th:1} implies that $f_A(z_0)$
converges and is non-positive for some $z_0\in [0,1]$.  
 It follows that $z_0=1$ and  $f_A(1)$ converges to zero, 
that is,  $1-X(1)+R(1) = 0$, where $X(1) = \Card X $ 
and $R(1) = \Card R$. Therefore,  $R$ is finite-dimensional  
and coincides with some $R'_n$, in the notation of the proof of Theorem~\ref{th:1}.
Thus $A = A'_n$ is GS, that is, $f_A(z_0) < 0$ for some $z_0 \in [0,1]$, a contradiction.
\end{proof}

The next example shows that  the assumptions of Theorem~\ref{th:1} cannot be weakened 
in direct way. We see that  the direct limit of a sequence of algebras $A^n$ can be non-wGS  if each $A^n$ is a finite modules over some free subalgebra or  if it is GS for some particular degree function.

\begin{exam}
\label{ex2}
Consider the sequence of algebras $A^n = \la x,y | x^2, xyx, \dots, xy^{n}x\ra $, where 
$n = 0,1,\dots$ 
Let 
$$A = \lim_{n\to \infty} A^n = \la x,y | xy^{t}x , t\ge 0\ra.
$$
We claim that each algebra $A^n$ satisfies the following properties:

(i) it is GS w.~r.~t.  some degree function depending on $n$;

(ii) it is a finite left module over a subalgebra which is free of rank two 
(hence, this subalgebra is GS w.~r.~t. 
the standard degree function induced from $A_0$).

At the same time, the algebra $A$ has linear growth, in particular, $A$ is not wGS with respect to any choice of generators and  degree function. 

\begin{proof}
To prove~(i), let $\deg x = a > 1$ and $\deg y =1$. 
Then for $z\in [0,1]$ we have
$$
f_{A^n}(z)  = 1-z-z^a+z^{2a}(1+z+\dots +z^n) \le g_n(z),
$$
where $g_n(z) = 1-z-z^a +nz^{2a}$. Let  $z_0 = (2n)^{-a^{-1}}$. Then  
 $g_n(z_0) = 1-z_0 -(4n)^{-1}$, so that for $a>>n$ we have
$f_{A^n}(z_0)   \le g_n(z_0)<0$.

To prove~(ii), consider  
the free  subalgebra $C^n = \la y, xy^{n+1}\ra \subset A^n $. Then the algebra $A^n$ as a left $C^n$--module is generated by the finite set $\{ 1,x,xy,\dots, xy^n\}$.  To evaluate the growth of the algebra $A$, just observe that its $t$-th component $A_t$ in standard grading is spanned by the set $\{y^t,xy^{t-1}, y^{t-1}x\}$, so that  $\dim A_t =3$ for each $t\ge 3$.
\end{proof}
 \end{exam}

\end{document}